\newtheorem{thm}{Theorem}
\newtheorem{lem}{Lemma}
\theoremstyle{definition}
\def\-{\mbox{--}}
\newtheorem{obs}{Observation}
\begin{document}
\title{Characterize graphs with rainbow connection number $m-2$ and $m-3$ \footnote{Supported by NSFC, PCSIRT and the ``973"
program. }}
\author{
\small  Xueliang Li, Yuefang Sun, Yan Zhao\\
\small Center for Combinatorics and LPMC-TJKLC\\
\small Nankai University, Tianjin 300071, P.R. China\\
\small E-mails: lxl@nankai.edu.cn, syf@cfc.nankai.edu.cn, zhaoyan2010@mail.nankai.edu.cn}
\date{}
\maketitle
\begin{abstract}
A path in an edge-colored graph, where adjacent edges may be colored
the same, is a rainbow path if no two edges of it are colored the
same. A nontrivial connected graph $G$ is rainbow connected if there
is a rainbow path connecting any two vertices, and the rainbow
connection number of $G$, denoted by $rc(G)$, is the minimum number of
colors that are needed in order to make $G$ rainbow connected.
Chartrand et al. obtained that $G$ is a tree if and only if
$rc(G)=m$, and it is easy to see that $G$ is not a tree if and only
if $rc(G)\leq m-2$, where $m$ is the number of edge of $G$. So there
is an interesting problem: Characterize the graphs $G$ with
$rc(G)=m-2$. In this paper, we settle down this problem. Furthermore, we also
characterize the graphs $G$ with $rc(G)=m-3$.\\[3mm]
{\bf Keywords:} edge-colored graph, rainbow path, rainbow connected,
rainbow connection number\\[3mm]
{\bf AMS Subject Classification 2010:} 05C15, 05C40
\end{abstract}

\section{Introduction}

All graphs in this paper are finite, undirected and simple. We
follow the terminology and notation of Bondy and Murty \cite{Bondy}.
Let $G$ be a nontrivial connected graph on which is defined a
coloring $c:E(G)\rightarrow \{1,2,\cdots,\ell\}$, $\ell\in
\mathbb{N}$, of the edges of $G$, where adjacent edges may be
colored the same. A path is a $rainbow$ $path$ if no two edges of it
are colored the same. An edge-coloring graph $G$ is $rainbow
~connected$ if any two vertices are connected by a rainbow path.
Clearly, if a graph is rainbow connected, it must be connected.
Conversely, any connected graph has a trivial edge-coloring that
makes it rainbow connected; just color each edge with a distinct
color. Thus, we define the $rainbow~connection~number$ of a
connected graph $G$, denoted by $rc(G)$, as the smallest number of
colors that are needed in order to make $G$ rainbow connected. If
$G_1$ is a connected spanning subgraph of $G$, then $rc(G)\leq
rc(G_1)$. Chartrand et al. obtained that $rc(G)=1$ if and only if
$G$ is complete, and that $rc(G)=m$ if and only if $G$ is a tree, as
well as that a cycle with $k>3$ vertices has rainbow connection
number $\lceil \frac{k}{2}\rceil$, a triangle has rainbow connection
number 1 \cite{Chartrand 1}. Also notice that, clearly, $rc(G)\geq
diam(G)$, where $diam(G)$ denotes the diameter of $G$. For more
information on rainbow connections, we refer to \cite{LiShiSun,
Li-Sun2}. In an edge-colored graph $G$, we use $c(e)$ to denote the
color of edge $e$, then for a subgraph $G_2$ of $G$, $c(G_2)$
denotes the set of colors of edges in $G_2$.

Since $rc(G)=m$ if and only if $G$ is a tree, $rc(G)\neq m-1$ and
$G$ is not a tree if and only if $rc(G)\leq m-2$ (Observation
\ref{ob3}), then there is an interesting problem: Characterize the
graphs with $rc(G)=m-2$. In this paper, we settle down this problem.
Furthermore, we also characterize the graphs $G$ with $rc(G)=m-3$.

We use~$V(G)$, $E(G)$ for the set of vertices and edges of $G$,
respectively. A $pendant~edge$ of $G$ is an edge incident to a
vertex of degree 1. The girth of $G$, denoted by $g(G)$, is the
length of a smallest cycle in $G$. A block of $G$ is a maximal
connected subgraph of $G$ that does not have any cut vertex. So
every block of a nontrivial connected graph is either a $K_2$ or a
2-connected subgraph. All the blocks of a graph $G$ form a block
decomposition of $G$. A $rooted~tree~T(x)$ is a tree $T$ with a
specified vertex $x$, called the $root$ of $T$. Let $L(x)$ denote
the set of leaves of $T(x)$ and $|L(x)|=l(x)$. If $T(x)$ is a
trivial tree, then $l(x)=0$. We let $P_n$ and $C_n$ be the path and
cycle with $n$ vertices, respectively. And $xPy$ denotes the path
from $x$ to $y$.

Let $G$ be a connected graph with $n$ vertices and $m$ edges. Define
the $cyclomatic~number$ of $G$ as $c(G)=m-n+1$. A graph $G$ with
$c(G)=k$ is called a $k$-$cyclic$ graph. According to this
definition, if a graph $G$ meets $c(G)=0$, 1, 2 or 3, then the graph
$G$ is called acyclic (or a tree), unicyclic, bicyclic, or
tricyclic, respectively. Thus the girth of a unicyclic graph is the
unique cycle in the graph. Let $[t]=\{1,\cdots,t\}$ denote the set
of the first $t$ natural numbers. For a set $S$, $|S|$ denotes the
cardinality of $S$.

\section{Some basic results}

We first give an observation which will be useful in the sequel.
\begin{obs}\label{ob1} \cite{Li-Sun1}
If $G$ is a connected graph and $\{E_i\}_{i\in [t]}$ is a partition
of the edge set of $G$ into connected subgraphs $G_i=G[E_i]$, then
$$rc(G)\leq \sum_{i=1}^{t}{rc(G_i)}.$$\qed
\end{obs}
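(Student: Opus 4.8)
The plan is to exhibit a single edge-coloring of $G$ that uses exactly $\sum_{i=1}^{t}rc(G_i)$ colors and makes $G$ rainbow connected; the bound then follows immediately from the definition of $rc(G)$. First I would assign to each piece $G_i$ a palette of $rc(G_i)$ fresh colors, taking the palettes of distinct pieces to be pairwise disjoint, and color the edges of $G_i$ according to some fixed optimal rainbow-connecting coloring of $G_i$ (which exists by the definition of $rc(G_i)$). Since $\{E_i\}_{i\in[t]}$ partitions $E(G)$, every edge of $G$ receives exactly one color, and the total number of colors used is $\sum_{i=1}^{t}rc(G_i)$.

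Next I would record how the pieces fit together by forming an auxiliary graph $H$ whose vertices are $G_1,\dots,G_t$, joining $G_i$ and $G_j$ whenever they share a vertex of $G$. I claim $H$ is connected: given any two pieces, pick a vertex in each and join them by a path in the connected graph $G$; since consecutive edges of this path share an endpoint, they lie in pieces that either coincide or are adjacent in $H$, so reading off the pieces along the path yields a walk in $H$ between the two chosen pieces. Fix a spanning tree of $H$.

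To connect two vertices $u,v\in V(G)$, note that each of them lies in at least one piece, since $G$ is connected and nontrivial and hence every vertex has degree $\ge 1$. Choose pieces $G_a\ni u$ and $G_b\ni v$ and take the path $G_a=G_{i_0},G_{i_1},\dots,G_{i_s}=G_b$ in the spanning tree; for each consecutive pair pick a shared vertex $w_j\in V(G_{i_{j-1}})\cap V(G_{i_j})$. Inside each piece I route a rainbow path between its designated entry and exit vertices (with $w_0:=u$ and $w_{s+1}:=v$ at the ends), which is possible because each $G_{i_j}$ is rainbow connected by its own palette. Concatenating these sub-paths gives a $u$--$v$ walk that is rainbow: within each piece the colors are distinct, and colors coming from different pieces are distinct because the palettes were chosen disjoint.

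The routine content is the coloring and the connectedness of $H$; the one point that genuinely needs care is that concatenating rainbow sub-paths yields a \emph{walk} rather than a \emph{path}. I would resolve this by the standard walk-to-path reduction: repeatedly delete a closed sub-walk between any two occurrences of a repeated vertex, so the surviving edge set is always a subset of a rainbow edge set and hence stays rainbow, terminating in a genuine rainbow $u$--$v$ path. I expect the only things worth double-checking to be the degenerate case $t=1$ and pieces that are single edges $K_2$ (where $rc=1$), to confirm the entry/exit and shared-vertex bookkeeping is consistent in all cases.
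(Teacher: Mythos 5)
Your proof is correct; the paper states this observation without proof (citing \cite{Li-Sun1}), and your argument — disjoint palettes on the pieces, a spanning tree of the intersection graph of the $G_i$, concatenation of rainbow subpaths through shared vertices, and the walk-to-path reduction (valid here because the tree path visits each piece at most once, so all edges of the walk get distinct colors) — is the standard one. Nothing is missing.
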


We now give a necessary condition for an edge-colored graph to be
rainbow connected. If $G$ is rainbow connected under some
edge-coloring, then for any two cut edges (if exist) $e_1=u_1u_2$,
$e_2=v_1v_2$, there must exist some $1\leq i,j\leq 2$, such that any
$u_i-v_j$ path must contain edge $e_1,e_2$. So we have:
\begin{obs}\label{ob2}
If $G$ is rainbow connected under some edge-coloring, $e_1$ and
$e_2$ are any two cut edges, then $c(e_1)\neq c(e_2)$.
\end{obs}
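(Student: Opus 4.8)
The plan is to reduce the statement to the existence, for any two distinct cut edges, of a pair of endpoints whose every connecting path is forced to pass through both cut edges; the conclusion then follows immediately from the definition of rainbow connectivity. Concretely, I would first record the defining property of a cut edge: if $e_1=u_1u_2$ is a cut edge, then $G-e_1$ has exactly two components, one containing $u_1$ and one containing $u_2$, and every path joining the two sides must use $e_1$.

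Next I would establish the key structural fact: the two endpoints of $e_2$ lie in the same component of $G-e_1$. This holds because $e_2\neq e_1$, so the edge $e_2$ itself survives in $G-e_1$ and directly joins $v_1$ and $v_2$. Symmetrically, $u_1$ and $u_2$ lie in the same component of $G-e_2$. These two observations are the heart of the argument.

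With this in hand I would make the choice of indices explicit. Write $G-e_1=A_1\cup A_2$ with $u_1\in A_1$, $u_2\in A_2$, and suppose $v_1,v_2$ both lie in $A_k$; then the endpoint $u_{k'}$ on the opposite side (where $\{k,k'\}=\{1,2\}$) is separated from both $v_1,v_2$ by the cut $e_1$. Likewise write $G-e_2=B_1\cup B_2$ with $v_j\in B_j$, let $u_1,u_2$ both lie in $B_l$, and pick the opposite endpoint $v_{l'}$. Then the pair $(u_{k'},v_{l'})$ works: any $u_{k'}$--$v_{l'}$ path crosses the $e_1$-cut, hence contains $e_1$, and crosses the $e_2$-cut, hence contains $e_2$.

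Finally, since $G$ is rainbow connected, there is a rainbow path $P$ joining $u_{k'}$ and $v_{l'}$. By the previous step $P$ contains both $e_1$ and $e_2$ as distinct edges, and because $P$ is rainbow these two edges receive different colors, giving $c(e_1)\neq c(e_2)$. The only subtle point, and the place where I would be most careful, is the second paragraph: verifying that each cut edge keeps the endpoints of the other cut edge on a single side, since this is exactly what guarantees that one pair of endpoints is simultaneously separated by both cuts.
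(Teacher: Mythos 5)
Your argument is correct and is essentially the same as the paper's: the paper justifies the observation by asserting that some pair of endpoints $u_i,v_j$ has the property that every $u_i$--$v_j$ path contains both $e_1$ and $e_2$, and you supply the (correct) verification of that assertion via the components of $G-e_1$ and $G-e_2$ before concluding from the existence of a rainbow path.
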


For a connected graph $G$, if it is a tree, then $rc(G)=m$; if it contains a
unique cycle of length $k$, then we give the cycle a rainbow
coloring using $\lceil \frac{k}{2}\rceil$ colors (if the cycle is a
triangle, we just need one color) and color each other edge with a
fresh color. Then by Observation \ref{ob1}, we have $rc(G)\leq
(m-k)+\lceil \frac{k}{2}\rceil \leq m-2$. So we have the following
observation.
\begin{obs}\label{ob3}
Let $G$ be a connected graph with $m$ edges. Then $rc(G)\neq m-1$
and $G$ is not a tree if and only if $rc(G)\leq m-2$. Moreover, if
$G$ contains a cycle of length $k$($k\geq4$), then $rc(G)\leq
m-\lfloor\frac{k}{2}\rfloor$.
\end{obs}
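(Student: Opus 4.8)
The plan is to reduce the entire statement to two ingredients already at hand: the theorem of Chartrand et al.\ that $rc(G)=m$ holds exactly when $G$ is a tree, together with the subadditivity bound of Observation~\ref{ob1} applied to a well-chosen edge partition. I would organize everything around the dichotomy \emph{tree versus not a tree}, since once the non-tree case is pinned down to $rc(G)\le m-2$, every remaining assertion follows immediately.

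First I would dispose of the easy direction of the equivalence. If $rc(G)\le m-2$, then in particular $rc(G)\ne m$, so by the Chartrand et al.\ characterization $G$ cannot be a tree; this settles one implication with no further work.

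The substance lies in the converse. Suppose $G$ is connected but not a tree, so it contains a cycle $C$ of some length $k\ge 3$. I would partition $E(G)$ into the edge set of $C$ together with the remaining $m-k$ edges, each taken as its own single-edge subgraph $K_2$. This is a partition into connected subgraphs, so Observation~\ref{ob1} gives $rc(G)\le rc(C)+(m-k)\cdot rc(K_2)=rc(C)+(m-k)$. Now I invoke the known rainbow connection numbers of cycles, namely $rc(C)=\lceil k/2\rceil$ when $k\ge 4$ and $rc(C)=1$ when $k=3$. In the triangle case this yields $rc(G)\le (m-3)+1=m-2$, and in the case $k\ge 4$ it yields $rc(G)\le (m-k)+\lceil k/2\rceil = m-\lfloor k/2\rfloor\le m-2$. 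Either way $rc(G)\le m-2$, which completes the equivalence and, as a byproduct, establishes the ``moreover'' bound $rc(G)\le m-\lfloor k/2\rfloor$ for any cycle of length $k\ge 4$.

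Finally I would deduce $rc(G)\ne m-1$ by the same dichotomy: if $G$ is a tree then $rc(G)=m\ne m-1$, and if $G$ is not a tree then the bound just proved gives $rc(G)\le m-2<m-1$. The one point demanding care---and the reason the clean bound $m-\lfloor k/2\rfloor$ is stated only for $k\ge4$---is the triangle, where $rc(C_3)=1$ rather than $\lceil 3/2\rceil=2$. Keeping the case $k=3$ separate is exactly what prevents the weaker value $m-1$ from creeping in, and this is the only genuine subtlety in an otherwise routine argument.
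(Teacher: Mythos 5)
Your proposal is correct and follows essentially the same route as the paper: the paper also colors a chosen cycle rainbow with $\lceil k/2\rceil$ colors (one color for a triangle), gives every remaining edge a fresh color, and applies Observation~\ref{ob1} to get $rc(G)\leq (m-k)+\lceil k/2\rceil\leq m-2$, with the tree case handled by the Chartrand et al.\ characterization. Your explicit treatment of the triangle as the case forcing the bound $m-2$ rather than $m-\lfloor k/2\rfloor$ matches the paper's parenthetical remark exactly.
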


For a connected graph $G$, if it contains two edge-disjoint
2-connected subgraphs $B_1$ and $B_2$, then by Observation
\ref{ob3}, we give $B_1$ and $B_2$ a rainbow coloring using
$|E(B_1)|-2$ and $|E(B_2)|-2$ colors, respectively, and color each
other edge with a fresh color. Then by Observation \ref{ob1}, we
have $rc(G)\leq m-4$. So the following lemma holds.

\begin{lem}\label{lem1}
Let $G$ be a connected graph with $m$ edges. If it contains two
edge-disjoint 2-connected subgraphs, then $rc(G)\leq m-4$.
\end{lem}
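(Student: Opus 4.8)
The plan is to exhibit an economical rainbow-connecting coloring of $G$ by coloring the two prescribed $2$-connected subgraphs with few colors and spending a single fresh color on every remaining edge, and then to extract the bound $rc(G)\le m-4$ directly from Observations \ref{ob1} and \ref{ob3}. In other words, I would treat $B_1$ and $B_2$ as two cheap pieces and the rest of $G$ as a collection of trivial pieces, and then simply add up the costs.

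First I would fix the two edge-disjoint $2$-connected subgraphs $B_1$ and $B_2$. Since every $2$-connected graph contains a cycle, neither $B_i$ is a tree, so Observation \ref{ob3} applied to $B_i$ itself gives $rc(B_i)\le |E(B_i)|-2$; note this is already tight in the extremal case $B_i=C_3$, where $rc(C_3)=1=3-2$. Next I would set up the partition of $E(G)$ needed for Observation \ref{ob1}: take $E(B_1)$ and $E(B_2)$ as two of the parts and let each of the remaining edges of $G$ form its own part. Because $B_1$ and $B_2$ are edge-disjoint, these parts genuinely partition $E(G)$, and the number of singleton parts is exactly $m-|E(B_1)|-|E(B_2)|$. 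Each $B_i$ is connected and each singleton is a $K_2$, so every part induces a connected subgraph, which is precisely what Observation \ref{ob1} requires; moreover each singleton contributes $rc(K_2)=1$.

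Applying Observation \ref{ob1} to this partition then yields
$$rc(G)\le rc(B_1)+rc(B_2)+\big(m-|E(B_1)|-|E(B_2)|\big),$$
and substituting $rc(B_i)\le |E(B_i)|-2$ gives the right-hand side at most $(|E(B_1)|-2)+(|E(B_2)|-2)+m-|E(B_1)|-|E(B_2)|=m-4$, as claimed. There is no substantial obstacle here; the only point requiring care is to check that the chosen parts induce connected subgraphs so that Observation \ref{ob1} legitimately applies, and that the edge-disjointness of $B_1$ and $B_2$ guarantees $E(B_1)$, $E(B_2)$ and the singletons form a true partition of $E(G)$ rather than an overlapping cover.
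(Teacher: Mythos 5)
Your proof is correct and follows essentially the same route as the paper: color each $B_i$ rainbow with $|E(B_i)|-2$ colors via Observation \ref{ob3}, give every remaining edge its own fresh color, and sum the contributions with Observation \ref{ob1} to get $rc(G)\le m-4$. The extra care you take in verifying that the parts are connected and genuinely partition $E(G)$ is a welcome but minor elaboration of the paper's argument.
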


To $subdivide$ an edge $e$ is to delete $e$, add a new vertex $x$,
and join $x$ to the ends of $e$. Any graph derived from a graph $G$
by a sequence of edge subdivisions is called a $subdivision$ of $G$.
Given a rainbow coloring of $G$, if we subdivide an edge $e=uv$ of
$G$ by $xu$ and $xv$, then we assign $xu$ the same color as $e$ and
assign $xv$ a new color, which also make the subdivision of $G$
rainbow connected. Hence, the following lemma holds.

\begin{lem}\label{lem2}
Let $G$ be a connected graph, and $H$ be a subdivision of $G$. Then
$rc(H)\leq rc(G)+|E(H)|-|E(G)|$.
\end{lem}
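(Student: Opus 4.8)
The plan is to induct on the number of edge subdivisions needed to build $H$ from $G$. Since a single subdivision deletes one edge and adds two, each subdivision raises the edge count by exactly one; hence if $H$ arises from $G$ by $k$ subdivisions then $|E(H)|-|E(G)|=k$, and every intermediate graph is again connected, so $rc$ is defined throughout. The base case $k=0$ is immediate because then $H=G$. For the inductive step it suffices to handle a single subdivision: if $H$ is obtained from a graph $G'$ by subdividing one edge, I will prove $rc(H)\leq rc(G')+1$, and then combine this with the induction hypothesis $rc(G')\leq rc(G)+(k-1)$ to obtain $rc(H)\leq rc(G)+k$, as required.

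For the single-subdivision step I would start from an optimal rainbow coloring of $G'$ using $rc(G')$ colors and subdivide the edge $e=uv$ into the two edges $xu$ and $xv$, where $x$ is the new vertex. Following the recipe indicated before the statement, I color $xu$ with $c(e)$ and give $xv$ a fresh color not occurring in $G'$, so that $H$ is colored with $rc(G')+1$ colors in total. The task is then to verify that $H$ is rainbow connected under this coloring, which yields $rc(H)\leq rc(G')+1$.

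To check rainbow connectivity I would separate the pairs lying in $V(G')$ from the pairs involving the new vertex $x$. For two vertices $a,b\in V(G')$, take a rainbow path $P$ from $a$ to $b$ in $G'$; if $P$ avoids $e$ it survives unchanged in $H$, and if $P$ uses $e$ I replace $e$ by the two-edge path through $x$ joining $u$ and $v$, whose two colors are $c(e)$ and the fresh color, both still distinct from the remaining colors of $P$, so the rerouted path stays rainbow. For a vertex $b\in V(G')$ paired with $x$, take a rainbow path $Q$ from $v$ to $b$ in $G'$: if $Q$ avoids $e$, prefixing it with the edge $xv$ gives a rainbow path from $x$ to $b$, since the fresh color of $xv$ cannot clash; if $Q$ uses $e$, then because $v$ is an endpoint of $Q$ the edge $e$ must be the first edge of $Q$, so I drop it and prefix the remaining subpath from $u$ to $b$ with $xu$, which is rainbow because $c(e)$ appears only on $e$ along the rainbow path $Q$.

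The only delicate point, and where I expect the real work to lie, is this rerouting argument: I must guarantee that replacing $e$ by the detour through $x$ never repeats a color. This rests on two facts, namely that the fresh color on $xv$ occurs nowhere else in $H$, and that, since the original path was rainbow, the color $c(e)$ carried by $xu$ does not recur on the rest of the rerouted path. Once these are confirmed for every case, each pair of vertices of $H$ is joined by a rainbow path, which establishes $rc(H)\leq rc(G')+1$ and completes the induction.
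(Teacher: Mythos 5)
Your proposal is correct and takes essentially the same route as the paper: the paper's proof consists precisely of the recipe you follow (assign $xu$ the color of $e$ and $xv$ a fresh color, iterating over subdivisions), and merely asserts that the resulting coloring is rainbow connected. Your induction on the number of subdivisions and the case analysis verifying the rerouted paths simply fill in the details the paper leaves implicit.
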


The $\Theta$-$graph$ is a graph consisting of three internally
disjoint paths with common end vertices and of lengths $a$, $b$, and
$c$, respectively, such that $a\leq b\leq c$. Then $a+b+c=m$.

\begin{lem}\label{lem3}
Let $G$ be a $\Theta$-graph with $m$ edges. If $m=5$, then
$rc(G)=m-3$; otherwise, $rc(G)\leq m-4$.
\end{lem}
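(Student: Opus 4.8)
The plan is to reduce the statement, for $m\ge 6$, to two small base graphs and let subdivision do the rest. The key remark is a monotonicity consequence of Lemma \ref{lem2}: if a connected graph $G$ has $rc(G)\le |E(G)|-4$ and $H$ is any subdivision of $G$, then
$$rc(H)\le rc(G)+\big(|E(H)|-|E(G)|\big)\le |E(H)|-4 .$$
Subdividing an edge lying on one of the three paths of a theta graph again produces a theta graph (one path is lengthened, the new vertex having degree $2$), so if a suitable finite family of theta graphs satisfies $rc\le |E|-4$, then so does every theta graph obtained from it by subdivisions.

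Next I would fix the base family. Write the path lengths as $a\le b\le c$ with $a+b+c=m$; since $G$ is simple, two paths of length $1$ are impossible, so $b\ge 2$ whenever $a=1$. For $m=5$ the only possibility is $(a,b,c)=(1,2,2)$, which is $K_4$ minus an edge. For $m\ge 6$ I claim every theta graph is a subdivision of $\Theta(2,2,2)$ or of $\Theta(1,2,3)$: if $a\ge 2$ then all three lengths are at least $2$ and $G$ is a subdivision of $\Theta(2,2,2)$; if $a=1$ then $b\ge 2$, and $b+c=m-1\ge 5$ together with $b\le c$ forces $c\ge 3$, so $G$ is a subdivision of $\Theta(1,2,3)$. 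Thus these two graphs generate all theta graphs with $m\ge 6$.

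It then remains to treat the three base graphs directly. For $m=5$, $K_4-e$ has diameter $2$, so $rc\ge 2$, while a $2$-colouring giving the unique non-adjacent pair a rainbow $2$-path clearly exists; hence $rc=2=m-3$. For $\Theta(2,2,2)=K_{2,3}$ with parts $\{u,v\}$ and $\{p,q,r\}$, I would give $p,q,r$ three distinct ordered colour pairs $(c(up),c(vp))\in\{1,2\}^2$; any two midpoints then differ in a coordinate and so are joined by a rainbow $2$-path through $u$ or $v$, while an off-diagonal pair (present since only one of the four pairs is omitted) rainbow-connects $u$ and $v$, giving $rc\le 2=m-4$. A similar explicit $2$-colouring handles $\Theta(1,2,3)$. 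Feeding these into the monotonicity remark yields $rc(G)\le 2+(m-6)=m-4$ for every theta graph with $m\ge 6$.

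The crux is the bookkeeping rather than any hard estimate. One must make sure the two graphs $\Theta(2,2,2)$ and $\Theta(1,2,3)$ genuinely generate all $m\ge 6$ theta graphs under subdivision --- this is exactly where simplicity (forcing $b\ge 2$, hence $c\ge 3$, when $a=1$) is used --- and one must verify the small $2$-colourings rainbow-connect every pair, the only delicate pairs being internal vertices on different paths, whose short connecting routes are forced through $u$ or $v$. After these finitely many checks, Lemma \ref{lem2} propagates the bound to all larger theta graphs automatically.
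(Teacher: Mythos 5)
Your proposal is correct and follows essentially the same route as the paper: both reduce the case $m\ge 6$ to the two base graphs $\Theta(1,2,3)$ and $\Theta(2,2,2)$, exhibit $2$-colourings of these, and propagate the bound via Lemma \ref{lem2}, while the $m=5$ case is settled by a $2$-colouring of $K_4-e$ together with the diameter lower bound. Your explicit justification that every $\Theta$-graph with $m\ge 6$ is a subdivision of one of the two base graphs (using simplicity to force $b\ge 2$, $c\ge 3$ when $a=1$) is a detail the paper merely asserts.
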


\begin{proof}
Let the three internally disjoint paths be $P_1, P_2, P_3$ with the
common end vertices $u$ and $v$, and the lengths of $P_1, P_2, P_3$
be $a,b,c$, respectively, where $a\leq b\leq c$. If $m=5$, we color
$uP_1v$ with color 1, $uP_2v$ with colors $1,2$, and $uP_3v$ with
colors $2,1$. The resulting coloring makes $G$ rainbow connected.
Thus, $rc(G)\leq m-3$. Since $diam(G)=2$, it follows that
$rc(G)=m-3$. For $m\geq6$, we first consider the graph $\Theta_1$
with $a=1$, $b=2$ and $c=3$. We color $uP_1v$ with color 1, $uP_2v$
with colors $1,1$, and $uP_3v$ with colors $2,1,2$. Next we consider
the graph $\Theta_2$ with $a=2$, $b=2$ and $c=2$. We color $uP_1v$
with colors 1,2, $uP_2v$ with colors $2,1$, and $uP_3v$ with colors
$2,2$. The resulting colorings make $\Theta_1$ and $\Theta_2$
rainbow connected. For a general $\Theta$-graph $G$ with $m\geq6$,
it is a subdivision of $\Theta_1$ or $\Theta_2$, hence by Lemma
\ref{lem2}, $rc(G)\leq m-4$.
\end{proof}

\section{Characterize unicyclic graphs with $rc(G)=m-2$ and $m-3$}

In this section we first give an observation about unicyclic graphs
which will be used frequently. Let $G$ be a connected unicyclic
graph with the unique cycle $C=v_1v_2\cdots v_sv_1$. For brevity,
orient $C$ clockwise. Then $G$ has the structure as follows: a tree,
denoted by $T(v_i)$, is attached at each vertex $v_i$ of $C$. Note
that, $T(v_i)$ may be trivial. Let $i\neq j$. If
$e_i=x_iy_i$($e_j=x_jy_j$) is a pendant edge which belongs to a tree
$T(v_i)$($T(v_j)$). Then there is a unique path
$x_iP_iv_i$($x_jP_jv_j$) from $x_i$($x_j$) to $v_i$($v_j$). Since
$v_i$ and $v_j$ divide $C$ into two segments $v_iCv_j$ and
$v_jCv_i$, there are exactly two paths between $x_i$ and $x_j$ in
$G$. Let $c=\{1,2,\cdots,\ell\}$ be an edge coloring of $G$. Since
each edge in $G\setminus E(C)$ is a cut edge, by Observation
\ref{ob2}, they must obtain distinct colors. It is easy to see that
$|c(x_iP_iv_i)\cap c(C)|\leq1$. In the process of coloring, we
always first color $G\setminus E(C)$ with $[t]$ colors, then color
$C$, where $t=|E(G)\setminus E(C)|$. Thus, after coloring
$E(G)\setminus E(C)$, the unique path $x_iP_iv_i$ can be viewed as a
pendant edge and every $T(v_i)$ will be a star with the center
vertex $v_i$. Suppose $|c(x_iP_iv_i)\cap c(C)|=1$ and
$|c(x_jP_jv_j)\cap c(C)|=1$, then we can adjust the colors of cut
edges such that $c(e_i)=1$ and $c(e_j)=2$. Thus, $1,2\in v_iCv_j$ or
$1,2\in v_jCv_i$, namely, 1,2 can only be assigned in the same path
from $v_i$ to $v_j$. Moreover, another path from $v_i$ to $v_j$
should be rainbow. We summarize the above argument into an
observation.

\begin{obs}\label{ob4}
Let $G$ be a connected unicyclic graph with the unique cycle
$C=v_1v_2\cdots v_sv_1$, and let $c=\{1,2,\cdots,\ell\}$ be an edge
coloring of $G$. Let $p\in T(v_i)$ and $q,r\in T(v_j)$.

($i$) If $p,q\in C$, then they are in the same path from $v_i$ to
$v_j$ and the other path from $v_i$ to $v_j$ should be rainbow.

($ii$) If $q,r$ are in the unique path from a vertex $x$ of
$V(G)\setminus V(C)$ to $v_j$, then $q$ and $r$ can not both belong
to $C$.
\end{obs}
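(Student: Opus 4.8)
The plan is to derive both parts from Observation \ref{ob2} together with one simple structural fact: in a unicyclic graph, any two vertices lying off the cycle in two different attached trees (and likewise a leaf off the cycle together with a vertex on $C$) are joined by \emph{exactly two} paths, one through each of the two segments into which the relevant cycle vertices split $C$. Since $G$ is rainbow connected, at least one of these two paths must be rainbow, and this dichotomy is the engine of the whole argument. I would begin by recording that every edge of $G\setminus E(C)$ is a cut edge, so by Observation \ref{ob2} all such edges receive pairwise distinct colors; in particular a fixed color occurs on any pendant path at most once, so the only way a color can repeat along one of our candidate paths is if one occurrence sits on an edge of $C$.

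For part ($i$), I would fix a leaf $x_i$ whose pendant path $x_iP_iv_i$ carries the color $p$ and a leaf $x_j$ whose pendant path $x_jP_jv_j$ carries the color $q$, and then examine the two $x_i$--$x_j$ paths: $x_iP_iv_i$ followed by $v_iCv_j$ (resp. $v_jCv_i$) followed by $v_jP_jx_j$. The crux is a short case analysis. If $p$ appeared on the segment $v_iCv_j$ while $q$ appeared on $v_jCv_i$, then the first path would repeat $p$ and the second would repeat $q$, so neither would be rainbow, contradicting rainbow connectivity; hence $p$ and $q$ must lie on the same segment, say $v_iCv_j$. But then the path through $v_iCv_j$ already repeats $p$, so the $x_i$--$x_j$ rainbow path is forced to be the one through $v_jCv_i$, which in particular makes that other segment rainbow. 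This yields both assertions of ($i$) at once.

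For part ($ii$), I would argue by contradiction, assuming that two colors $q,r$ occurring together on the single pendant path $xP_jv_j$ also both occur on $C$. The two offending cycle edges split $C$ into two arcs, and I would choose a cycle vertex $w$ sitting between them, so that the clockwise $v_j$--$w$ arc meets exactly one of the two edges (say the one colored $q$) and the counterclockwise arc meets exactly the other (colored $r$). Then both $x$--$w$ paths fail: the clockwise one repeats $q$ and the counterclockwise one repeats $r$, contradicting rainbow connectivity. Equivalently, this establishes $|c(xP_jv_j)\cap c(C)|\le 1$, which is precisely ($ii$).

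The step I expect to require the most care is the choice of $w$ in part ($ii$): one must verify that a suitable separating vertex genuinely exists (it does, as soon as the two colored edges are distinct) and that each of the two cycle arcs from $v_j$ to $w$ contains \emph{precisely one} of them, so that the repeated color is correctly pinned down on each of the only two $x$--$w$ paths. By comparison the case analysis in ($i$) is routine once the two-path structure is set up, and the remainder of the proof is just careful bookkeeping of which color is forced to repeat on each candidate path.
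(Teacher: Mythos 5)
Your proposal is correct and follows essentially the same route as the paper's own justification (the paragraph preceding Observation~\ref{ob4}): both rest on the fact that two vertices in distinct attached trees (or one off-cycle vertex and one cycle vertex) are joined by exactly two paths, at least one of which must be rainbow, combined with Observation~\ref{ob2} forcing distinct colors on the cut edges. Your treatment of part ($ii$), with the explicit choice of the separating vertex $w$, is in fact more detailed than the paper, which dismisses the claim $|c(x_iP_iv_i)\cap c(C)|\leq 1$ as ``easy to see.''
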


In this section we only deal with unicyclic graphs. According to the
girth of $G$, we introduce some graph classes and discuss them by
some lemmas. Noth that, $l(v_i)$ is the number of leaves of the tree
attached at the vertex $v_i$ from the unique cycle of $G$.

Let $i$ be an integer with $1\leq i\leq 3$ and the addition is
performed modulo 3. Let $\mathcal{G}=\{G: m=n, g(G)=3\}$,
$\mathcal{G}_1=\{G: G\in \mathcal{G}, l(v_i)\geq1, l(v_{i+1})\geq1,
l(v_{i+2})\geq1,~or~l(v_i)\geq3\}$, $\mathcal{G}_2=\{G: G\in
\mathcal{G}, l(v_i)=0, l(v_{i+1})\leq2, l(v_{i+2})\leq2\}$.
Obviously, $\mathcal{G}=\mathcal{G}_1\cup \mathcal{G}_2$.

\begin{lem}\label{lem4}
Let $G$ be a graph belonging to $\mathcal{G}$. If $G\in
\mathcal{G}_1$, then $rc(G)=m-3$; otherwise $rc(G)=m-2$.
\end{lem}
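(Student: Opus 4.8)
The plan is to trap $rc(G)$ between $m-3$ and $m-2$ and then decide, according to membership in $\mathcal{G}_1$ or $\mathcal{G}_2$, which value is attained. Since $g(G)=3$, the unique cycle $C$ is a triangle, and every one of the $m-3$ edges of $G\setminus E(C)$ is a cut edge; by Observation \ref{ob2} these must receive pairwise distinct colors, so $rc(G)\ge m-3$. On the other hand, coloring $C$ with a single color and each cut edge with a fresh color makes $G$ rainbow connected, so by Observation \ref{ob1} and the fact that a triangle needs only one color we get $rc(G)\le (m-3)+1=m-2$. Hence $rc(G)\in\{m-3,m-2\}$, and $rc(G)=m-3$ holds exactly when the three edges of $C$ can be colored, reusing only the $m-3$ colors already on the cut edges, while keeping $G$ rainbow connected. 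Following the reduction described before Observation \ref{ob4}, I color $G\setminus E(C)$ with distinct colors $1,\dots,m-3$ and view each pendant path $x_iP_iv_i$ as a pendant edge, so the whole question becomes: can the triangle edges $e_{12}=v_1v_2$, $e_{23}=v_2v_3$, $e_{13}=v_1v_3$ be recolored from the existing palette?

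For the upper bound on $\mathcal{G}_1$ I would give two explicit reuse-colorings. If $l(v_1),l(v_2),l(v_3)\ge 1$, pick one pendant edge at each $v_i$ and assign to each triangle edge the color of a pendant lying at the vertex \emph{opposite} to it: $c(e_{12})$ a color from $T(v_3)$, $c(e_{23})$ a color from $T(v_1)$, and $c(e_{13})$ a color from $T(v_2)$. Any pair of vertices can then be routed through the triangle edge whose color comes from the third tree; since colors of distinct trees differ and all cut edges carry distinct colors, that path is rainbow. If instead some $l(v_i)\ge 3$, say $l(v_1)\ge 3$, I take pendant edges of three distinct leaves of $T(v_1)$, with colors $g_1,g_2,g_3$, and set $c(e_{12})=g_1$, $c(e_{13})=g_2$, $c(e_{23})=g_3$. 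A leaf whose pendant color is not among $g_1,g_2,g_3$ reaches the rest of $G$ by the short route, while a leaf carrying one of these colors is routed the long way, using the other two (distinct) colors, neither of which lies on its path. In both cases $G$ is rainbow connected with $m-3$ colors, so $rc(G)=m-3$.

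For the lower bound on $\mathcal{G}_2$ I assume $l(v_1)=0$ and $l(v_2),l(v_3)\le 2$, and suppose for contradiction that $m-3$ colors suffice. Then the cut edges use all $m-3$ colors, and since $T(v_1)$ is trivial, every color appears on $T(v_2)$ or $T(v_3)$; in particular the reused colors $c(e_{12}),c(e_{23}),c(e_{13})$ all come from these two trees. I would show that the three triangle edges cannot simultaneously rainbow-connect every leaf of $v_2$ and of $v_3$ to $v_1$, to the remaining cycle vertex, and to each leaf on the other side. The mechanism is that, with at most two leaves per side, the set of colors ``free'' for a given leaf (those off its pendant path) is small and essentially complementary to that of the other leaf on the same side; fixing $c(e_{23})$ then forces the cross-pairs it cannot serve onto the route through $v_1$, which in turn forces $c(e_{12})$ and $c(e_{13})$ into the disjoint color sets demanded by the leaf-to-$v_1$ and leaf-to-$v_3$ conditions, and no assignment can meet both. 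Carrying this out case by case over $(l(v_2),l(v_3))\in\{0,1,2\}^2$ (up to the evident symmetry) exhibits in each case a pair of vertices with no rainbow path, so $rc(G)\ge m-2$, whence $rc(G)=m-2$.

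The main obstacle is this last step. Unlike the constructions it is not one clever coloring but an impossibility that must hold for \emph{every} coloring and, crucially, for every tree shape compatible with the given leaf counts: the number of colors free for a leaf depends on how its pendant path branches, so the long-route option is more available for some trees than for others. I expect most of the effort to lie in verifying that, across all these structures, the joint demands coming from the leaf-free vertex $v_1$ and from the remaining cycle vertex, together with the cross-leaf pairs, are always inconsistent; organizing the case analysis so that the general-tree cases reduce cleanly to the extremal star and path configurations is where care will be needed.
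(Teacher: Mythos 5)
Your framework (trap $rc(G)$ between $m-3$ and $m-2$, then decide which bound is tight) and your two explicit colorings for $\mathcal{G}_1$ are exactly the paper's: the ``opposite-vertex'' assignment when $l(v_1),l(v_2),l(v_3)\ge 1$ is the paper's coloring $c(v_1v_2)=3$, $c(v_2v_3)=1$, $c(v_3v_1)=2$, and your $l(v_i)\ge 3$ case is its second coloring. That half is fine. The problem is the other half: for $G\in\mathcal{G}_2$ you do not actually prove $rc(G)\ge m-2$. You state a ``mechanism,'' defer to an unexecuted case analysis over $(l(v_2),l(v_3))\in\{0,1,2\}^2$ and over all tree shapes, and you yourself flag that you do not know how to control the dependence on the shape of the pendant trees (``the number of colors free for a leaf depends on how its pendant path branches''). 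Since the impossibility statement must hold for \emph{every} $(m-3)$-coloring, this is the heart of the $\mathcal{G}_2$ direction, and what you have is a plan with an acknowledged hole, not an argument.

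The missing idea is the one the paper packages as Observation \ref{ob4}, and it removes precisely the difficulty you are worried about. First, for any leaf $x$ of $T(v_j)$, the path $xP_jv_j$ consists of cut edges with pairwise distinct colors, and at most \emph{one} of those colors can appear on $C$ (Observation \ref{ob4}(ii)); this collapses every tree shape to a star of pendant paths, each donating at most one color to the triangle. Second, if a color from $T(v_2)$ and a color from $T(v_3)$ both appear on $C$, they must lie on the \emph{same} $v_2$--$v_3$ path (Observation \ref{ob4}(i)); in a triangle one of the two paths is a single edge, so two distinct such colors are forced onto the two-edge path $v_2v_1v_3$, leaving no room for a third. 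Combined with $l(v_1)=0$ (so every reusable color lives in $T(v_2)\cup T(v_3)$) and $l(v_2),l(v_3)\le 2$, this shows at most two of the $m-3$ cut-edge colors can be placed on the three edges of $C$, so a fresh color is unavoidable and $rc(G)\ge m-2$. Without this (or an equivalent completed case analysis), your proof of the $\mathcal{G}_2$ case is incomplete.
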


\begin{proof}
Let the unique cycle of $G$ be $C=v_1v_2v_3v_1$. Suppose $G\in
\mathcal{G}_1$, by Observation \ref{ob2}, each edge of $G\setminus
E(C)$ must obtain a distinct color, color them with a set $[m-3]$ of
colors. We consider two cases. Without loss of generality, first
suppose that $e_i=x_iy_i$ is a pendant edge in $T(v_i)$ that is
assigned color $i$, where $1\leq i\leq 3$. Set $c(v_1v_2)=3$,
$c(v_2v_3)=1$, $c(v_3v_1)=2$. Next suppose that $e_j=x_jy_j$ is a
pendant edge of $T(v_1)$ that is assigned color $j$, where $1\leq
j\leq 3$. Color $E(C)$ with 1,2,3, respectively. It is easy to show
that these two colorings are rainbow, and in these two cases,
$rc(G)=m-3$.

If $G\in \mathcal{G}_2$, by Observation \ref{ob3}, $rc(G)\leq m-2$.
By Observation \ref{ob4}, we know that at most two colors for
$G\setminus E(C)$ can assign to $C$. Thus, we need a fresh color for
$C$, it follows that $rc(G)\geq m-2$. Therefore, $rc(G)=m-2$.
\end{proof}

Let $i$ be an integer with $1\leq i\leq 4$ and the addition is
performed modulo 4. Set $\mathcal{H}=\{G: m=n, g(G)=4\}$ and
$\mathcal{H}=\mathcal{H}_1\cup \mathcal{H}_2\cup \mathcal{H}_3$,
where $\mathcal{H}_2=\{G: G\in \mathcal{H}, l(v_i)=l(v_{i+2})=0,
l(v_{i+1})\leq1, l(v_{i+3})\leq1\}$, $\mathcal{H}_3=\{G: G\in
\mathcal{H}, l(v_i)\geq4, or~ l(v_i)\geq1, l(v_{i+1})\geq2,
l(v_{i+2})\geq1\}$.

\begin{lem}\label{lem5}
Let $G$ be a graph belonging to $\mathcal{H}$. If $G\in
\mathcal{H}_2$, then $rc(G)=m-2$; if $G\in \mathcal{H}_3$, then
$rc(G)=m-4$; if $G\in \mathcal{H}_1$, then $rc(G)=m-3$.
\end{lem}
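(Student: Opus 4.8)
The plan is to prove all three equalities from a single scheme. The uniform lower bound $rc(G)\ge m-4$ holds for \emph{every} $G\in\mathcal H$: the $m-4$ edges lying outside the $4$-cycle $C=v_1v_2v_3v_4$ are exactly the cut edges of $G$, so by Observation \ref{ob2} they must receive pairwise distinct colors, whence at least $m-4$ colors are used. The three cases will then be separated by how many fresh colors the cycle $C$ is forced to consume on top of these. Throughout I first color $E(G)\setminus E(C)$ with the distinct colors $[m-4]$, as in the discussion preceding Observation \ref{ob4}; then each tree $T(v_i)$ may be treated as a star, and each leaf $x$ of $T(v_i)$ carries a single ``home color'', the color of its pendant edge, all home colors being distinct.

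For the upper bounds I try to color the four edges of $C$ using only colors from $[m-4]$, making $C$ a \emph{rainbow} $4$-cycle. A rainbow $C$ is internally rainbow connected, and a leaf $x$ with home color $c$ can still reach everything: to cross $C$ it takes the $v_i$-to-target arc of $C$ that avoids the unique edge colored $c$, and since $C$ is rainbow this detour repeats no color. The constructions then exploit two facts: (1) when adjacent cycle vertices $v_a,v_b$ both carry leaves, those leaves can only communicate across the single edge $e_{ab}$, forcing $c(e_{ab})$ to avoid the home colors at $v_a,v_b$; and (2) in $\mathcal H_3$ there are enough reusable colors to fill all four edges. In the subcase $l(v_i)\ge4$ I place four distinct leaf-colors of $T(v_i)$ on $C$; since these colors all live at the one vertex $v_i$ (and no other tree's home color appears on $C$), the detour argument removes every conflict, so $rc(G)\le m-4$. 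In the subcase $l(v_i)\ge1,\ l(v_{i+1})\ge2,\ l(v_{i+2})\ge1$ I again build a rainbow $C$ whose colors meet the adjacency constraint (1) at each loaded vertex; the counts $1,2,1$ are exactly what make this system of distinct color choices on the four edges solvable, giving $rc(G)\le m-4$. For $G\in\mathcal H_1$ the same idea reuses three edge-colors and spends one fresh color on the fourth edge, so $rc(G)\le m-3$, while $rc(G)\le m-2$ for $\mathcal H_2$ is immediate from Observation \ref{ob3} with $k=4$.

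The harder half is the matching lower bounds $rc(G)\ge m-3$ for $\mathcal H_1$ and $rc(G)\ge m-2$ for $\mathcal H_2$. Writing $rc(G)=(m-4)+s$, where $s$ is the minimum number of colors on $C$ that do not occur on any cut edge, I must show $s\ge1$ on $\mathcal H_1$ and $s\ge2$ on $\mathcal H_2$. I argue by contradiction with Observation \ref{ob4}: if two cut-colors $p\in T(v_i)$ and $q\in T(v_j)$ both reappear on $C$, then by \ref{ob4}($i$) they must lie on the same $v_i$-$v_j$ arc of $C$ with the complementary arc rainbow; together with the requirement that $C$ be internally rainbow connected, this tightly restricts how many cut-colors can be planted on $C$. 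In $\mathcal H_2$, where the two leafless vertices are opposite and the other two carry at most one leaf each, I show that keeping $s\le1$ always breaks the unique rainbow path between the two far leaves (or between the two leafless vertices), forcing $s\ge2$; in $\mathcal H_1$ the analogous but longer argument rules out $s=0$.

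I expect the genuine difficulty to be the lower bound for $\mathcal H_1$, compounded by the fact that $\mathcal H_1$ is defined only residually as $\mathcal H\setminus(\mathcal H_2\cup\mathcal H_3)$. Obtaining a clean trichotomy requires converting the leaf-count inequalities defining $\mathcal H_2$ and $\mathcal H_3$ into the precise feasibility threshold of the color-reuse, namely a small system of distinct-representative constraints on the four edges of $C$ (one constraint per loaded cycle vertex, from fact (1)), and then verifying that every configuration failing both the $\mathcal H_2$ and the $\mathcal H_3$ pattern lands exactly at ``one forced fresh color.'' This exhaustive reconciliation of the cases, rather than any single estimate, is where the real work lies.
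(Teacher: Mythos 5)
Your overall strategy coincides with the paper's: the uniform lower bound $rc(G)\ge m-4$ from the cut edges (Observation \ref{ob2}), the refined lower bounds for $\mathcal{H}_1$ and $\mathcal{H}_2$ from Observation \ref{ob4} by counting how many cycle colors are forced to be fresh, and upper bounds by explicitly reusing pendant-edge colors on $C$. Your two $\mathcal{H}_3$ constructions (four reused colors at one vertex, or the $1,2,1$ pattern) are exactly the paper's. One caution on the detour argument: for two loaded vertices that are \emph{opposite} on $C$ the two arcs partition $E(C)$, so a rainbow $C$ only works if the two home colors land on the same arc; your constraint (1) covers only adjacent pairs, and the opposite-pair constraint is what Observation \ref{ob4}($i$) really encodes in the $1,2,1$ case.

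There is, however, a genuine gap in your $\mathcal{H}_1$ upper bound. You insist that $C$ be rainbow, reusing three colors and adding one fresh color. But $\mathcal{H}_1$ contains graphs with only two reusable colors: by Observation \ref{ob4}($ii$) each leaf-path contributes at most one color to $C$, so for instance the graph with $l(v_1)=2$ and $l(v_2)=l(v_3)=l(v_4)=0$ (which lies in neither $\mathcal{H}_2$ nor $\mathcal{H}_3$) admits at most two reused colors on $C$; a rainbow $C$ would then require two fresh colors and only yields $rc(G)\le m-2$. The same happens for $l(v_1)=l(v_2)=1$ with the other two trees trivial. The fix, which is what the paper does, is to drop the rainbow requirement on $C$ and let the single fresh color appear on \emph{two} edges of the cycle, e.g.\ $c(v_1v_2)=c(v_1v_4)=m-3$, $c(v_2v_3)=1$, $c(v_3v_4)=2$; the color count is still $(m-4)+1=m-3$, and rainbow connectivity is checked directly. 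As stated, your scheme for $\mathcal{H}_1$ provably cannot reach $m-3$ on these graphs, so this step needs to be replaced rather than merely elaborated.
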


\begin{proof}
Let the unique cycle of $G$ be $C=v_1v_2v_3v_4v_1$. By Observation
\ref{ob2}, each edge of $G\setminus E(C)$ must obtain a distinct
color, this costs $m-4$ colors, thus $rc(G)\geq m-4$. Color
$G\setminus E(C)$ with a set $[m-4]$ of colors. Suppose $G\in
\mathcal{H}_2$. By Observation \ref{ob3}, $rc(G)\leq m-2$. By
Observation \ref{ob4}, we know that at least two colors different
from $c(G\setminus E(C))$ should assign to $C$, it follows that
$rc(G)\geq m-2$. Hence, $rc(G)=m-2$.

Suppose $G\in \mathcal{H}_3$. First let $e_i=x_iy_i$ be a pendant
edge in $T(v_1)$ that is assigned color $i$, where $1\leq i\leq 4$.
Color $E(C)$ with 1,2,3,4, respectively. Next suppose that
$e_j=x_jy_j$ is a pendant edge that is assigned color $j$ such that
$1\in T(v_1)$, $2,3\in T(v_2)$ and $4\in T(v_3)$, where $1\leq j\leq
4$. Set $c(v_1v_2)=4$, $c(v_2v_3)=1$, $c(v_3v_4)=3$, $c(v_1v_4)=2$.
It is easy to show that these two colorings are rainbow, and in
these two cases, $rc(G)=m-4$.

If $G\in \mathcal{H}_1$, by Observation \ref{ob4}, we check one by
one that at least one color different from $c(G\setminus E(C))$
should assign to $C$, thus $rc(G)\geq m-3$. If $e_1$ and $e_2$ are
two pendant edges in a tree (say $T(v_1)$) that are assigned colors
1 and 2, respectively. Set $c(v_1v_2)=m-3$, $c(v_2v_3)=1$,
$c(v_3v_4)=2$, $c(v_1v_4)=m-3$. By symmetry, it remains to consider
the case that $l(v_1)=l(v_2)=l(v_3)=1$. Suppose that $e_i=x_iy_i$ is
a pendant edge in $T(v_i)$ that is assigned color $i$, where $1\leq
i\leq 3$. Set $c(v_1v_2)=3$, $c(v_2v_3)=1$, $c(v_3v_4)=m-3$,
$c(v_1v_4)=2$. It is easy to show that these two colorings are
rainbow, and in these two cases, $rc(G)=m-3$.
\end{proof}

Let $i$ be an integer with $1\leq i\leq 5$ and the addition is
performed modulo 5. Set $\mathcal{J}=\{G: m=n, g(G)=5\}$ and
$\mathcal{J}=\mathcal{J}_1\cup \mathcal{J}_2\cup C_5$, where
$\mathcal{J}_1=\{G: G\in \mathcal{J}, l(v_i)\leq2, l(v_{i+2})\leq1,
l(v_{i+1})=l(v_{i+3})=l(v_{i+4})=0~ or ~l(v_i)\leq1,
l(v_{i+1})\leq1, l(v_{i+2})\leq1, l(v_{i+3})=l(v_{i+4})=0\}$.

\begin{lem}\label{lem6}
Let $G$ be a graph belonging to $\mathcal{J}$. If $G$ is isomorphic
to a cycle $C_5$, then $rc(G)=m-2$. If $G\in \mathcal{J}_1$, then
$rc(G)=m-3$. If $G\in \mathcal{J}_2$, then $rc(G)\leq m-4$.
\end{lem}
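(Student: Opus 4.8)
The plan is to reduce everything to a single count on the cycle and then split into the three stated regimes. Set $C=v_1v_2\cdots v_5v_1$, so $|E(C)|=5$. Since every edge off $C$ is a cut edge, Observation~\ref{ob2} forces these $m-5$ edges to receive pairwise distinct colors; coloring them with $[m-5]$ gives $rc(G)\ge m-5$, and more precisely $rc(G)=(m-5)+N$, where $N$ is the least number of colors outside $[m-5]$ that any rainbow coloring must place on $C$. Before treating the cases I would record one auxiliary bound: every rainbow coloring uses at least $3$ distinct colors on $C$. Indeed, with only $2$ colors the longest rainbow subpath of $C$ has length $2$, so each of the five length-$2$ arcs of $C$ would have to be bichromatic, i.e.\ consecutive edges must differ; this is a proper $2$-edge-coloring of $C_5$, impossible for an odd cycle. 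Thus the whole problem becomes the determination of $N$.

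For $G\cong C_5$ there are no off-cycle colors, so $N=3$ and $rc(G)=\lceil 5/2\rceil=3=m-2$ by the value of $rc(C_5)$ from \cite{Chartrand 1}. For $G\in\mathcal{J}_1$ I would prove $N=2$. The upper bound $N\le 2$ is handled exactly as in the proofs of Lemmas~\ref{lem4} and \ref{lem5}: for each of the two defining patterns of $\mathcal{J}_1$ (the ``$l(v_i)\le 2,\ l(v_{i+2})\le 1$'' pattern and the ``three consecutive vertices each with at most one leaf'' pattern) I would exhibit one explicit coloring that reuses a single off-cycle color on $C$ and spends two new colors, and then check directly that every pair of vertices is joined by a rainbow path.

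The matching lower bound $N\ge 2$ is the heart of the argument, and I would prove it by contradiction, assuming a rainbow coloring with at most one new color on $C$. The engine is a short-arc/long-arc dichotomy. If two pendant (root-path) colors $a,b$ attached at distinct cycle-vertices $v_i\ne v_j$ are both reused on $C$, then a rainbow path between the two pendant endpoints must cross an arc of $C$ avoiding both $a$ and $b$. A length-$3$ arc doing so would need three distinct colors drawn from the colors on $C$ with $a,b$ deleted, which is impossible once $C$ carries at most one new color together with too few reused colors; the join is therefore forced onto a length-$2$ arc that must simultaneously avoid two prescribed colors while being rainbow, which a two-edge arc cannot do. Using Observation~\ref{ob4} to pin down the admissible placements of the reused colors, I would run this contradiction through the finitely many configurations allowed by $\mathcal{J}_1$, obtaining $N\ge 2$ and hence $rc(G)=m-3$.

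For $G\in\mathcal{J}_2$ only the inequality $rc(G)\le m-4$ is required, so it suffices to build a coloring with $N\le 1$, i.e.\ to reuse enough off-cycle colors that a single new color remains. The additional density of $\mathcal{J}_2$ supplies the ``long-arc escape'': when some cycle-vertex carries at least $3$ leaves (or when two suitably placed vertices together carry enough leaves) one can reuse three off-cycle colors so that each blocked pendant is routed around the length-$3$ arc, which is now rainbow and avoids the single forbidden color. To keep the casework finite I would invoke the monotonicity $rc(G+e)\le rc(G)+1$ for a pendant edge $e$, so that $m-rc(G)$ never decreases when leaves are added; it then remains only to produce an $(m-4)$-coloring for the minimal configurations lying just outside $\mathcal{J}_1\cup\{C_5\}$, and every larger member of $\mathcal{J}_2$ follows. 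The step I expect to be the main obstacle is the $\mathcal{J}_1$ lower bound: one must verify across all admissible reuse patterns, including those reusing three or more colors, that no clever choice drives $N$ down to $1$, and isolating exactly which leaf configurations admit a rainbow length-$3$ detour—precisely the boundary between $\mathcal{J}_1$ and $\mathcal{J}_2$—is the delicate combinatorial core of the proof.
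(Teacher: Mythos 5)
Your overall strategy coincides with the paper's: Observation~\ref{ob2} forces the $m-5$ off-cycle edges to receive pairwise distinct colors, so everything reduces to determining how many colors on $C$ must lie outside that palette; the upper bounds come from explicit colorings and the lower bounds from Observation~\ref{ob4}. Two of your organizing devices do go beyond what the paper writes down and are worth keeping: the parity argument showing that any rainbow coloring of a graph in $\mathcal{J}$ must place at least three distinct colors on $C$ (otherwise every length-$2$ arc would have to be rainbow, forcing a proper $2$-edge-coloring of the odd cycle $C_5$), which settles the $C_5$ case and anchors all the lower bounds; and the monotonicity $rc(G+e)\le rc(G)+1$ for a pendant edge $e$, which reduces the $\mathcal{J}_2$ upper bound to the finitely many minimal leaf-profiles instead of the paper's three ``without loss of generality'' cases.

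That said, as written this is a plan rather than a proof: every place where the actual work lives is marked ``I would exhibit/verify.'' Three deliverables are missing. First, the explicit $(m-3)$-colorings for the two defining patterns of $\mathcal{J}_1$; the paper needs only one, namely $c(v_1v_2)=c(v_4v_5)=m-4$, $c(v_2v_3)=c(v_1v_5)=m-3$, $c(v_3v_4)=1$ with $1$ a reused pendant color at $v_1$, and you should check it (or a variant) covers both patterns. Second, the list of minimal $\mathcal{J}_2$ profiles together with an $(m-4)$-coloring for each: up to rotation and reflection these are $(3,0,0,0,0)$, $(2,1,0,0,0)$, $(2,0,2,0,0)$ and $(1,1,0,1,0)$, and your reduction obliges you to treat $(2,0,2,0,0)$, which does not obviously fall under any of the paper's three cases; note also that ``minimal'' here means a minimal profile, not a finite list of graphs, so the colorings must work for arbitrary attached trees realizing the profile (which is fine, since each root path contributes at most one reusable color). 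Third, the $N\ge2$ verification for $\mathcal{J}_1$: your short-arc/long-arc dichotomy is the right engine, but as stated it only addresses two reused colors attached at distinct cycle vertices; the configurations where two reused colors come from the same tree $T(v_i)$ (possible when $l(v_i)=2$) and where three colors are reused with no new color at all are not covered by the sketch and must be run separately. None of these steps looks likely to fail, but until they are written out the lemma is not proved.
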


\begin{proof}
Let the unique cycle of $G$ be $C=v_1v_2v_3v_4v_5v_1$. If $G$ is
isomorphic to a cycle $C_5$, it is easy to see that $rc(G)=m-2$.
Suppose $G\in \mathcal{J}_1$. Suppose $e_1$ is a pendant edge of
$T(v_1)$ that is assigned color 1. Set $c(v_1v_2)=m-4$,
$c(v_2v_3)=m-3$, $c(v_3v_4)=1$, $c(v_4v_5)=m-4$, $c(v_1v_5)=m-3$.
Thus $rc(G)\leq m-3$. On the other hand, since it costs $m-5$ colors
for $G\setminus E(C)$,  and by Observation \ref{ob4}, we know that
at least two colors different from $c(G\setminus E(C))$ should
assign to $C$, it follows that $rc(G)\geq m-3$. Therefore,
$rc(G)=m-3$.

Suppose $G\in \mathcal{J}_2$. Without loss of generality, we
consider the following three cases. If $l(v_i)\geq3$ for some $i$
with $1\leq i\leq 5$, then we may suppose that $e_1$, $e_2$ and
$e_3$ are the three pendant edges of $T(v_1)$ that are assigned
colors 1,2,3, respectively. Set $c(v_1v_2)=m-4$, $c(v_2v_3)=3$,
$c(v_3v_4)=2$, $c(v_4v_5)=1$, $c(v_1v_5)=m-4$. If $l(v_i)=2$, then
we may suppose that $e_1$, $e_2$ are the two pendant edges of
$T(v_1)$ that are assigned colors 1,2, respectively, and $e_3$ is a
pendant edge of $T(v_2)$ that is assigned color 3. Set
$c(v_1v_2)=m-4$, $c(v_2v_3)=1$, $c(v_3v_4)=2$, $c(v_4v_5)=m-4$,
$c(v_1v_5)=3$. It remains to consider the case that $l(v_i)\leq1$
for each $i$. Without loss of generality, let
$l(v_1)=l(v_2)=l(v_4)=1$. Suppose that $e_i$ is a pendant edge that
is assigned color $i$ such that $e_1\in T(v_1)$, $e_2\in T(v_2)$ and
$e_3\in T(v_4)$, where $1\leq i\leq 3$. Set $c(v_1v_2)=3$,
$c(v_2v_3)=m-4$, $c(v_3v_4)=1$, $c(v_4v_5)=2$, $c(v_1v_5)=m-4$. It
is easy to show that these three colorings are rainbow, and in these
three cases, $rc(G)\leq m-4$.
\end{proof}

Let $i$ be an integer with $1\leq i\leq 6$ and the addition is
performed modulo 6. Set $\mathcal{L}=\{G: m=n, g(G)=6\}$ and
$\mathcal{L}=\mathcal{L}_1\cup \mathcal{L}_2$, where
$\mathcal{L}_1=\{G: G\in \mathcal{L}, l(v_i)\leq1, l(v_{i+3})\leq1,
l(v_{i+1})=l(v_{i+2})=l(v_{i+4})=l(v_{i+5})=0\}$.

\begin{lem}\label{lem7}
Let $G$ be a graph belonging to $\mathcal{L}$. If $G\in
\mathcal{L}_1$, then $rc(G)=m-3$; otherwise $rc(G)\leq m-4$.
\end{lem}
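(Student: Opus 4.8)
The plan is to establish the two bounds after one uniform reduction. Since every $G\in\mathcal{L}$ contains the $6$-cycle $C=v_1v_2\cdots v_6v_1$, Observation \ref{ob3} already gives $rc(G)\le m-\lfloor 6/2\rfloor=m-3$ for all of $\mathcal{L}$; hence for $\mathcal{L}_1$ only the lower bound $rc(G)\ge m-3$ is missing, and for $\mathcal{L}_2$ only the strengthened upper bound $rc(G)\le m-4$ is missing. The only cut edges of $G$ are the tree edges, which by Observation \ref{ob2} must all receive distinct colours, costing exactly $m-6$ colours; calling a colour on $C$ \emph{new} if it is not one of these $m-6$ colours, we have $rc(G)=(m-6)+(\text{number of new colours on }C)$. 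Thus the whole lemma reduces to showing that $G\in\mathcal{L}_1$ needs at least three new colours on $C$, while $G\in\mathcal{L}_2$ can be coloured with at most two.

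For the lower bound on $\mathcal{L}_1$, note that here all leaves lie on an antipodal pair $v_i,v_{i+3}$ and each $T(v_i),T(v_{i+3})$ is a path, so \emph{every} cut edge lies on one of these two pendant paths. Let $x,y$ be the two extreme leaves (with $x=v_i$, resp. $y=v_{i+3}$, if a tree is trivial; if both are trivial then $G=C_6$ and $rc=3=m-3$). I would then consider any rainbow $x$--$y$ path: it is the concatenation of $x\,P\,v_i$, a length-$3$ arc of $C$ joining the antipodal vertices $v_i,v_{i+3}$, and $v_{i+3}\,P\,y$. Its three cycle edges must avoid every colour appearing on the two pendant paths, i.e. all $m-6$ cut colours, so they are new. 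Hence at least three new colours occur on $C$, giving $rc(G)\ge(m-6)+3=m-3$ and, with the upper bound, $rc(G)=m-3$.

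For the upper bound on $\mathcal{L}_2$ the key structural point is that $G\notin\mathcal{L}_1$ supplies two cut edges incident to cycle vertices $v_p,v_q$ that are either equal (when some $l(v_i)\ge2$, take two leaves of $T(v_i)$) or non-antipodal (when all $l(v_i)\le1$ but the leaf-support is not contained in an antipodal pair, so among its $\ge2$ vertices two are at distance $1$ or $2$, since among any three vertices of $C$ some two are non-antipodal). Colour $E(G)\setminus E(C)$ with distinct private colours, let $\alpha,\beta$ be the colours of the two chosen cut edges, and colour $C$ from the palette $\{\alpha,\beta,\gamma_1,\gamma_2\}$ using only the two fresh colours $\gamma_1,\gamma_2$, so that $C$ is rainbow connected and the two leaves carrying $\alpha,\beta$ each reach every vertex along an arc of $C$ avoiding its own colour. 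Every remaining leaf, whose root-path uses only private colours, then reaches all of $G$ for free through a rainbow arc of $C$ (its leading private colour clashing with nothing on $C$). It remains to exhibit one such $4$-colouring of $C$ in each of the three cases $d(v_p,v_q)\in\{0,1,2\}$; for instance when $v_p,v_q$ are at distance $2$ (say pendants at $v_1,v_3$) one can take $c(v_1v_2),\ldots,c(v_6v_1)=\gamma_1,\gamma_2,\alpha,\gamma_1,\gamma_2,\beta$. This gives $rc(G)\le(m-6)+2=m-4$.

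The step I expect to be the real obstacle is producing the correct cycle colouring in the distance-$2$ case, because the antipodal case is genuinely unsolvable (that is precisely why $\mathcal{L}_1$ lands at $m-3$), so the construction must exploit non-antipodality. The difficulty is that each of the two special leaves needs a rainbow length-$3$ arc to its own antipode avoiding its colour; with only two fresh colours available each such arc is forced to equal $\{\text{other reused colour},\gamma_1,\gamma_2\}$, and the two arcs share an edge, so the placements of $\alpha$ and $\beta$ interlock and must be pinned down by hand (naive ``place each reused colour far from its pendant'' choices fail). A secondary technicality is to confirm the reduction for general branching trees: the discussion preceding Observation \ref{ob4} together with Lemma \ref{lem2} let each root-path act as a single pendant edge, and when $l(v_i)\ge2$ with branching below $v_i$ one reuses $\alpha,\beta$ on the two edges leaving the lowest common ancestor of the two chosen leaves, the shared prefix retaining private colours, which reproduces the $v_p=v_q$ construction verbatim.
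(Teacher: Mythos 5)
Your approach is essentially the paper's: force $m-6$ distinct colours on the cut edges by Observation \ref{ob2}, then decide how many fresh colours the cycle needs, with the upper bound for $\mathcal{L}_2$ obtained by reusing two pendant colours on $C$ together with two fresh ones, split into the cases $d(v_p,v_q)\in\{0,1,2\}$. Your lower-bound argument for $\mathcal{L}_1$ is in fact tighter than the paper's (which only gestures at Observation \ref{ob4}): since in $\mathcal{L}_1$ every cut edge lies on one of the two antipodal root-paths, a rainbow path between the two extreme leaves must traverse all $m-6$ cut edges plus a $3$-arc of $C$, so that arc carries three fresh, pairwise distinct colours; that is a complete and concrete argument.

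The one genuine gap is that you exhibit only the $d=2$ colouring (which is correct, and coincides with the paper's) and defer $d\in\{0,1\}$. The case $d=0$ is indeed routine ($\gamma_1,\gamma_2,\alpha,\beta,\gamma_1,\gamma_2$ around $C$ works), but you have misjudged where the obstacle lies: it is $d=1$, not $d=2$, that is delicate. The natural pattern $\gamma_1,\gamma_2,\alpha,\beta,\gamma_1,\gamma_2$ with $\alpha$ at $T(v_1)$ and $\beta$ at $T(v_2)$ --- which is what the paper itself writes down for this case --- fails, because neither arc from the $\beta$-leaf to $v_5$ is rainbow ($\beta,\gamma_2,\alpha,\beta$ one way, $\beta,\gamma_1,\gamma_2,\gamma_1$ the other). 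A correct choice does exist, e.g.\ $c(v_1v_2),\dots,c(v_6v_1)=\gamma_1,\gamma_2,\alpha,\gamma_1,\beta,\gamma_2$, so your plan goes through and the lemma stands; but the deferred case must be written out explicitly, since it is precisely the verification that cannot be waved away here.
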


\begin{proof}
Let the unique cycle of $G$ be $C=v_1v_2v_3v_4v_5v_6v_1$. By
Observation \ref{ob2}, each edge of $G\setminus E(C)$ must obtain a
distinct color, this costs $m-6$ colors, thus $rc(G)\geq m-6$. Color
$G\setminus E(C)$ with a set $[m-6]$ of colors. Suppose $G\in
\mathcal{L}_1$. Set $c(v_1v_2)=m-5$, $c(v_2v_3)=m-4$,
$c(v_3v_4)=m-3$, $c(v_4v_5)=m-5$, $c(v_5v_6)=m-4$, $c(v_1v_6)=m-3$.
By Observation \ref{ob2}, $rc(G)\leq m-3$. On the other hand, by
Observation \ref{ob4}, we know that at least three colors different
from $c(G\setminus E(C))$ should assign to $C$, it follows that
$rc(G)\geq m-3$. Therefore, $rc(G)=m-3$.

Suppose $G\in \mathcal{L}_2$. If $l(v_i)\geq2$, then we may suppose
that $e_1$ and $e_2$ are the two pendant edges of $T(v_1)$ that are
assigned colors 1,2, respectively. Set $c(v_1v_2)=m-5$,
$c(v_2v_3)=m-4$, $c(v_3v_4)=1$, $c(v_4v_5)=2$, $c(v_5v_6)=m-5$,
$c(v_1v_6)=m-4$. It remains to consider the case that $l(v_i)\leq1$
for each $i$. Suppose $l(v_1)=l(v_2)=1$. Let $e_1$ and $e_2$ be the
two pendant edges that are assigned colors 1,2, respectively, such
that $e_1\in T(v_1)$ and $e_2\in T(v_2)$. Set $c(v_1v_2)=m-5$,
$c(v_2v_3)=m-4$, $c(v_3v_4)=1$, $c(v_4v_5)=2$, $c(v_5v_6)=m-5$,
$c(v_1v_6)=m-4$. Without loss of generality, let $l(v_1)=l(v_3)=1$.
Suppose that $e_1$ and $e_2$ are the two pendant edges that are
assigned colors 1,2, respectively, such that $e_1\in T(v_1)$ and
$e_2\in T(v_3)$. Set $c(v_1v_2)=m-5$, $c(v_2v_3)=m-4$,
$c(v_3v_4)=1$, $c(v_4v_5)=m-5$, $c(v_5v_6)=m-4$, $c(v_1v_6)=2$. It
is easy to show that these three colorings are rainbow, and in these
three cases, $rc(G)\leq m-4$.
\end{proof}

\section{Characterize graphs with $rc(G)=m-2$ and $m-3$}

Now we are ready to characterize the graphs with $rc(G)=m-2$ and $rc(G)=m-3$.

\begin{thm}\label{thm1}
$rc(G)=m-2$ if and only if $G$ is isomorphic to a cycle $C_5$ or
belongs to $\mathcal{G}_2\cup\mathcal{H}_2$.
\end{thm}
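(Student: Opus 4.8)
The plan is to prove both directions, with the forward (``if'') direction being immediate and the converse carrying all of the work. For the ``if'' direction I would simply invoke the classification lemmas already established: if $G\in\mathcal{G}_2$ then Lemma \ref{lem4} gives $rc(G)=m-2$; if $G\in\mathcal{H}_2$ then Lemma \ref{lem5} gives $rc(G)=m-2$; and if $G\cong C_5$ then Lemma \ref{lem6} gives $rc(G)=m-2$. No further argument is needed here.

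For the ``only if'' direction, suppose $rc(G)=m-2$. Since $rc(G)<m$, $G$ is not a tree, so its cyclomatic number satisfies $c(G)\geq1$. The first main step is to show that $G$ is unicyclic, i.e. $c(G)=1$. I would argue via the block decomposition of $G$. If two distinct blocks are $2$-connected, then they are edge-disjoint $2$-connected subgraphs, and Lemma \ref{lem1} forces $rc(G)\leq m-4$, a contradiction. Otherwise at most one block $B$ is $2$-connected and every other block is a bridge; since all cycles of $G$ lie in $B$, the assumption $c(G)\geq2$ yields $c(B)\geq2$, so $B$ is $2$-connected but is not a cycle and therefore contains a $\Theta$-subgraph $H$. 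Taking $H$ as one part and each edge of $E(G)\setminus E(H)$ as a singleton part, Observation \ref{ob1} gives $rc(G)\leq rc(H)+(m-|E(H)|)$; applying Lemma \ref{lem3} (where $rc(H)=|E(H)|-3$ if $|E(H)|=5$ and $rc(H)\leq|E(H)|-4$ otherwise) yields $rc(G)\leq m-3$, again a contradiction. Hence $c(G)=1$ and $G$ is unicyclic.

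The second step bounds the girth. Since $G$ is unicyclic, its unique cycle has length $k=g(G)$. If $k\geq6$, then Observation \ref{ob3} gives $rc(G)\leq m-\lfloor k/2\rfloor\leq m-3$, contradicting $rc(G)=m-2$. Thus $g(G)\in\{3,4,5\}$, so $G$ lies in one of the families $\mathcal{G}$, $\mathcal{H}$, $\mathcal{J}$, and I would read off the answer from the subclass classifications. Within $\mathcal{G}=\mathcal{G}_1\cup\mathcal{G}_2$ only $\mathcal{G}_2$ gives $rc(G)=m-2$ (Lemma \ref{lem4}); within $\mathcal{H}=\mathcal{H}_1\cup\mathcal{H}_2\cup\mathcal{H}_3$ only $\mathcal{H}_2$ gives $rc(G)=m-2$ (Lemma \ref{lem5}); and within $\mathcal{J}=\mathcal{J}_1\cup\mathcal{J}_2\cup C_5$ only $C_5$ gives $rc(G)=m-2$ (Lemma \ref{lem6}). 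Combining these, $rc(G)=m-2$ forces $G\cong C_5$ or $G\in\mathcal{G}_2\cup\mathcal{H}_2$, which completes the characterization.

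The step I expect to be the main obstacle is the structural claim inside the unicyclic reduction: that $c(G)\geq2$ guarantees either two edge-disjoint $2$-connected subgraphs or a $\Theta$-subgraph. The block-decomposition dichotomy reduces this to the standard fact that a $2$-connected graph which is not a cycle contains a $\Theta$-subgraph, which I would justify by an ear decomposition, or by choosing a cycle through a vertex of degree at least $3$ together with an internally disjoint path joining two of its vertices. The remaining care is purely bookkeeping: one must verify that the edge-partition fed into Observation \ref{ob1} really is a partition into \emph{connected} subgraphs, so that Lemma \ref{lem3} applies to the $\Theta$-part and each leftover edge contributes exactly $1$.
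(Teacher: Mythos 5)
Your proposal is correct and follows essentially the same route as the paper: exclude two edge-disjoint $2$-connected subgraphs via Lemma \ref{lem1}, exclude a $\Theta$-subgraph via Lemma \ref{lem3} (with Observation \ref{ob1}) to conclude $G$ is unicyclic, bound the girth by $5$ via Observation \ref{ob3}, and then read off the answer from Lemmas \ref{lem4}, \ref{lem5} and \ref{lem6}. Your write-up is in fact somewhat more careful than the paper's, which compresses the unicyclic reduction into two citations.
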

\begin{proof}
Suppose that $G$ is a graph with $rc(G)=m-2$. By Lemma \ref{lem1},
$G$ contains a unique 2-connected subgraph. By Lemma \ref{lem2}, $G$
contains no $\Theta$-graph as a subgraph. It follows that $G$ is a
unicyclic graph. By Observation \ref{ob3}, the girth of $G$ is at
most 5. The cases that the girth of $G$ is 3,4 and 5 have been
discussed in Lemmas \ref{lem4}, \ref{lem5} and \ref{lem6},
respectively. We conclude that $G$ must be isomorphic to a graph
shown in our theorem.

Conversely, By Lemmas \ref{lem4}, \ref{lem5} and \ref{lem6}, the
result holds.
\end{proof}

Let $\mathcal{M}$ be a class of graphs where in each graph a path is
attached at each vertex of degree 2 of $K_4-e$, respectively. Note
that, the path may be trivial.

\begin{thm}\label{thm2}
$rc(G)=m-3$ if and only if $G$ is isomorphic to a cycle $C_7$ or
belongs to
$\mathcal{G}_1\cup\mathcal{H}_1\cup\mathcal{J}_1\cup\mathcal{L}_1\cup\mathcal{M}$.
\end{thm}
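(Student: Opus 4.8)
The plan is to mirror the structure of the proof of Theorem~\ref{thm1}, splitting into the ``necessity'' and ``sufficiency'' directions and using the cyclomatic number to bound how complicated $G$ can be. For the sufficiency direction, the membership $G\in\mathcal{G}_1\cup\mathcal{H}_1\cup\mathcal{J}_1\cup\mathcal{L}_1$ is already settled by Lemmas \ref{lem4}, \ref{lem5}, \ref{lem6} and \ref{lem7}, which give $rc(G)=m-3$ in precisely those classes. So the only genuinely new work on this side is to handle $C_7$ and the class $\mathcal{M}$. For $C_7$ one simply recalls the Chartrand formula $rc(C_k)=\lceil k/2\rceil$, giving $rc(C_7)=4=7-3=m-3$. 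For $\mathcal{M}$, I would first verify the claim on $K_4-e$ itself (which has $m=5$ edges): exhibit an explicit coloring showing $rc(K_4-e)\le 2=m-3$ and use $diam=2$ together with Observation~\ref{ob2} to show it cannot do better, then extend to a general member of $\mathcal{M}$ by Lemma~\ref{lem2}, since every graph in $\mathcal{M}$ is a subdivision of $K_4-e$; subdividing adds one edge and (by the construction in Lemma~\ref{lem2}) one color, keeping the value at $m-3$, with a matching lower bound coming from the two edge-disjoint cut-edge--free structure.

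For the necessity direction, suppose $rc(G)=m-3$. I would first bound the cyclomatic number. If $G$ contained two edge-disjoint $2$-connected subgraphs, Lemma~\ref{lem1} would force $rc(G)\le m-4$, a contradiction; so $G$ has at most one such ``block.'' If $G$ contained a $\Theta$-graph with $m_\Theta\ge 6$ edges as a subgraph, Lemma~\ref{lem3} combined with Observation~\ref{ob1} would again drive $rc(G)$ down to $m-4$ or below. The surviving possibilities are therefore that $G$ is unicyclic, or that $G$ contains a $\Theta$-graph with exactly $5$ edges (i.e.\ $K_4-e$ or a subdivision of it) as its unique nontrivial $2$-connected block, which is exactly the class $\mathcal{M}$.

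Having reduced to the unicyclic case, I would then use Observation~\ref{ob3}: a cycle of length $k\ge 4$ forces $rc(G)\le m-\lfloor k/2\rfloor$, so $rc(G)=m-3$ requires $\lfloor k/2\rfloor\le 3$, i.e.\ $g(G)\le 7$; and combined with the requirement that the cycle contribute at least $3$ to the savings, the girth lies in $\{3,4,5,6,7\}$. Each girth value is then dispatched by the corresponding lemma: $g=3$ by Lemma~\ref{lem4} (giving $\mathcal{G}_1$), $g=4$ by Lemma~\ref{lem5} (giving $\mathcal{H}_1$), $g=5$ by Lemma~\ref{lem6} (giving $\mathcal{J}_1$), $g=6$ by Lemma~\ref{lem7} (giving $\mathcal{L}_1$), and $g=7$ by the observation that a girth-$7$ unicyclic graph achieves $m-3$ only when it is the bare cycle $C_7$ (any attached pendant edge is a cut edge whose fresh color, together with $rc(C_7)=4$, only pushes the total back toward $m$, so strictly more savings are impossible and strictly fewer are needed). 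Collecting the five cases yields exactly the stated family.

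The main obstacle I anticipate is the $\mathcal{M}$ case, both in the necessity and sufficiency directions. On the necessity side one must argue carefully that a graph whose unique $2$-connected block is a $\Theta$-graph on $5$ edges, with trees hanging off, achieves $rc=m-3$ \emph{only} when the attached trees are paths (so the block is $K_4-e$ with subdivided ``middle'' edges and pendant paths), and that any branching in an attached tree would either be absorbable into further color savings (dropping to $m-4$) or fall outside the claimed family. Establishing the sharp lower bound $rc(G)\ge m-3$ for every $G\in\mathcal{M}$ — ruling out $rc(G)=m-4$ — is the delicate point, and I would handle it by an Observation~\ref{ob4}-style counting argument adapted to the $K_4-e$ block, showing that no edge-coloring can reuse more than two non-cut colors on the block without destroying some required rainbow path.
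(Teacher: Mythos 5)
Your overall architecture matches the paper's (unique block via Lemma \ref{lem1}, $\Theta$-subgraph restriction via Lemma \ref{lem3}, girth at most $7$ via Observation \ref{ob3}, Lemmas \ref{lem4}--\ref{lem7} for girths $3$--$6$), but there are several genuine gaps in the necessity direction. First, your girth-$7$ argument is backwards: the point is not that a pendant edge ``pushes the total back toward $m$'' but that it permits \emph{extra} savings --- if $G$ has girth $7$ and a pendant edge colored $1$, one colors the $7$-cycle with only three new colors via the pattern $m-6,m-5,m-4,1,m-6,m-5,m-4$, giving $rc(G)\leq m-4$; your reasoning as written would leave every girth-$7$ unicyclic graph at $m-3$ and hence wrongly inside the family. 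Second, your reduction of the non-cycle-block case is incomplete: every $\Theta$-subgraph of $K_4$ has path lengths $1,2,2$ and hence only $5$ edges, so excluding $\Theta$-graphs with at least $6$ edges does not exclude $B\cong K_4$; the paper disposes of this case separately by giving all six block edges one new color, which yields $rc(G)\leq m-5$. Third, even after $B\cong K_4-e$ is established, ``the block is $K_4-e$'' is far from ``$G\in\mathcal{M}$'': you correctly flag this as the main obstacle but supply no argument, whereas the paper gives two explicit colorings showing $rc(G)\leq m-4$ whenever a nontrivial tree hangs at a degree-$3$ vertex ($l(v_1)\geq 1$) or a tree with two leaves hangs at a degree-$2$ vertex ($l(v_2)\geq 2$).

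On the sufficiency side, your plan to treat a member of $\mathcal{M}$ as a subdivision of $K_4-e$ and invoke Lemma \ref{lem2} fails: attaching a pendant path at a degree-$2$ vertex creates vertices of degree $1$, so the resulting graph is not $2$-connected and is not a subdivision of $K_4-e$ (subdivision inserts vertices inside existing edges). The correct tool is Observation \ref{ob1}: give the $m-5$ cut edges distinct colors and color the block with two new colors, e.g. $c(v_1v_2)=c(v_3v_4)=c(v_1v_3)=m-4$ and $c(v_2v_3)=c(v_1v_4)=m-3$, paired with the lower bound that the block requires at least two colors not used on cut edges. Your proposed ``Observation \ref{ob4}-style'' counting for that lower bound is the right instinct and matches the paper's (terse) justification, but as stated it is a plan rather than a proof.
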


\begin{proof}
Suppose that $G$ is a graph with $rc(G)=m-3$. By Lemma \ref{lem1},
$G$ contains a unique 2-connected subgraph $B$. Set
$V(B)=\{v_1,\cdots,v_s\}$, then $G$ has the structure as follows: a
tree, denoted by $T(v_i)$, is attached at each vertex $v_i$ of $B$.
If $B$ is exactly a cycle, then by Observation \ref{ob3}, the girth
of $G$ is at most 7. The cases that the girth of $G$ is 3,4,5 and 6
have been discussed in Lemmas \ref{lem4}, \ref{lem5}, \ref{lem6} and
\ref{lem7}, respectively. It remains to deal with the case that the
girth of $G$ is 7. If $G$ is not isomorphic to a cycle $C_7$, then
suppose that $e_1$ is a pendant edge of $T(v_1)$ that is assigned
color 1. Color $G\setminus E(B)$ with a set $[m-7]$ of colors and
set $c(v_1v_2)=m-6$, $c(v_2v_3)=m-5$, $c(v_3v_4)=m-4$,
$c(v_4v_5)=1$, $c(v_5v_6)=m-6$, $c(v_6v_7)=m-5$, $c(v_1v_7)=m-4$. By
Observation \ref{ob1}, we have $rc(G)\leq m-4$.

So $B$ is not a cycle. By Lemma \ref{lem3}, $G$ contains no
$\Theta$-graph except a $K_4-e$ as a subgraph. We first claim that
$B$ is isomorphic to a $K_4-e$. If $B$ is isomorphic to a $K_4$, we
first color the edges of $G\setminus E(B)$ with $m-6$ colors, then
give each edge of $B$ the same new color, this costs $m-5$ colors
totally, it is easy to check that this coloring is rainbow, and in
this case, $rc(G)\leq m-5$, a contradiction.  Set
$V(K_4-e)=\{v_1,v_2,v_3,v_4\}$, and
$E(K_4-e)=\{v_1v_2,v_2v_3,v_3v_4,v_4v_1,v_1v_3\}$. If $G\notin
\mathcal{M}$, then $l(v_i)\geq1$ or $l(v_j)\geq2$ where $i=1~or~3$,
$j=2~or~4$. If $l(v_1)\geq1$, suppose that $e_1$ is a pendant edge
of $T(v_1)$ that is assigned color 1. Assign color 1 to $v_2v_3$ and
$m-4$ to each other edge of $K_4-e$. If $l(v_2)\geq2$, suppose that
$e_1$ and $e_2$ are two pendant edges of $T(v_2)$ that are assigned
colors 1 and 2, respectively. Set
$c(v_1v_2)=c(v_2v_3)=c(v_1v_3)=m-4$, $c(v_3v_4)=1$, $c(v_1v_4)=2$.
In both cases, $rc(G)\leq m-4$. We conclude that $G$ must be
isomorphic to a graph shown in our theorem.

Conversely, if $G$ is isomorphic to a cycle $C_7$, then $rc(G)=m-3$.
If $G\in \mathcal{M}$, it is easy to see that at least two new
colors different from $c(G\setminus E(B))$ should be assigned to
$B$. Since each edge of $G\setminus E(B)$ must obtain a distinct
color, this costs $m-5$ colors, it follows that $rc(G)\geq m-3$. Set
$c(v_1v_2)=c(v_3v_4)=c(v_1v_3)=m-4$, $c(v_2v_3)=c(v_1v_4)=m-3$, thus
$rc(G)\leq m-3$. Therefore, $rc(G)=m-3$. By Lemmas \ref{lem4},
\ref{lem5}, \ref{lem6} and \ref{lem7}, the result holds.
\end{proof}


\begin{thebibliography}{99}

\small \setlength{\itemsep}{-.8mm}
\bibitem{Bondy} J.A. Bondy, U.S.R. Murty,
{\it Graph Theory}, GTM 244, Springer, 2008.

\bibitem{Caro}  Y. Caro, A. Lev, Y. Roditty, Z. Tuza, R. Yuster,
$On$ $rainbow$ $connection$, Electron. J. Combin. {\bf15}(2008), R57.

\small \setlength{\itemsep}{-.8mm}
\bibitem{Chartrand 1} G. Chartrand, G.L. Johns, K.A. McKeon,
P. Zhang, $Rainbow$ $connection$ $in$ $graphs$, Math. Bohem.
{\bf133}(2008), 85-98.

\small \setlength{\itemsep}{-.8mm}
\bibitem{LiShiSun} X. Li, Y. Shi, Y. Sun, {\it Rainbow connections of
graphs: A survey}, Graphs Combin. {\bf 29}(2013), 1--38.

\small \setlength{\itemsep}{-.8mm}
\bibitem{Li-Sun1} X. Li, Y. Sun, $Rainbow$ $connection$ $numbers$
$of$ $line$ $graphs$, Ars Combin. {\bf100}(2011), 449-463.

\small \setlength{\itemsep}{-.8mm}
\bibitem{Li-Sun2} X. Li, Y. Sun, {\it Rainbow Connections of Graphs},
Springer Briefs in Math., Springer, New York, 2012.


\end{thebibliography}
\end{document}